\documentclass[10pt]{amsart}
\usepackage{latexsym, amsfonts, delarray, amsmath,delarray}

\newtheorem{thm}{Theorem}[section]

\newtheorem{prop}[thm]{Proposition}

\newtheorem{lem}[thm]{Lemma}

\newtheorem{remark}[thm]{Remark}

\newtheorem{quest}[thm]{Question}

\def \R {{\rm{I\hspace{-1mm}R}}}

\def \bx {{\bf x}}

\def\proof{\noindent \textsc{Proof:}\ }
\def\endproof{$\Box$ \medskip}




\newcommand{\calL}{{\mathcal L}}

\def\calP{\mathcal{P}}
\def\calO{\mathcal{O}}

\def\FF{\mathbb{F}}

\def\RR{\mathbb{R}}
\def\real{\mathbb{R}}
\def\ZZ{\mathbb{Z}}
\def\zed{\mathbb{Z}}

\def\mod{\mathrm{\; mod\; }}

\def\SL2{\mathrm{SL}_{2}}

\def\proof{\noindent \textsc{Proof:}\ }
\def\endproof{ \hfill \parbox{0.5cm}{$\Box$}}

\newcommand{\rmv}[1]{}

\def \spn {\mathrm{span}}
\def \span {\mathrm{span_{K}}}

 \def \A {{\bf A}}
 \def \P {{\bf P}}
\def \Q {{\bf Q}}
\def \R {{\bf R}}
\def \S {{\bf S}}
\def \T {{\bf T}}
\def \U {{\bf U}}

\begin{document}
\title{Lattices from elliptic curves over finite fields}
\author{Lenny Fukshansky}
\author{Hiren Maharaj}\thanks{The first author was partially supported by NSA Young Investigator Grant \#1210223 and Simons Foundation grants \#208969, 279155.}
\address{Department of Mathematics, 850 Columbia Avenue, Claremont McKenna College, Claremont, CA 91711}
\email{lenny@cmc.edu}
\address{8543 Hillside Road, Rancho Cucamonga, CA 91701}
\email{hmahara@g.clemson.edu}

\subjclass[2010]{Primary: 11H06, 11G20}
\keywords{function fields, elliptic curves, well-rounded lattices}

\begin{abstract}
In their well known book~\cite{tv} Tsfasman and Vladut introduced a construction of a family of {\it function field lattices} from algebraic curves over finite fields, which have asymptotically good packing density in high dimensions. In this paper we study geometric properties of lattices from this construction applied to elliptic curves. In particular, we determine the generating sets, conditions for 
well-roundedness and a formula for the number of minimal vectors. We also prove a bound on the covering radii of these lattices, which improves on the standard inequalities.
\end{abstract}

\maketitle

\section{Introduction}
\label{intro}

Let $L \subset \RR^n$ be a lattice of rank $k \leq n$, and let $V = \spn_{\RR} L$ be the $k$-dimensional subspace of~$\RR^n$ spanned by~$L$. The {\it minimum distance} of $L$ is
$$d(L) = \min \{ \|\bx\| : \bx \in L \},$$
where $\|\ \|$ is the usual Euclidean norm in~$\RR^n$. The {\it lattice (sphere) packing} in~$V$ associated to~$L$ is the arrangement of balls of radius~$d(L)/2$ centered at points of~$L$, and the {\it density} $\Delta(L)$ of such packing is the proportion of~$V$ taken up by this arrangement,~i.e.
\begin{equation}
\label{delta}
\Delta(L) =  \frac{\omega_k d(L)^k}{2^k \det L},
\end{equation}
where $\omega_k = \frac{\pi^{ \frac{k}{2}}}{\Gamma(\frac{k}{2}+1)}$ is the volume of a $k$-dimensional unit ball. Given a $k$-dimensional subspace $V$ of $\real^n$, the lattice packing problem in~$V$ is to find a lattice~$L \subset V$ such that $V = \spn_{\RR} L$ and $\Delta(L)$ is maximal among all lattice packing densities in~$V$. It is easy to see that the lattice packing density problem in~$V$ is equivalent to this problem in~$\real^k$, where we denote the maximal lattice packing density achieved by~$\Delta_k$. The values of~$\Delta_n$ are currently only known for dimensions $1 \leq n \leq 8$~\cite{conway} and $n=24$~\cite{cohn_kumar} with explicit constructions of lattices achieving these densities. More generally, the famous Minkowski-Hlawka theorem states that in every dimension~$n$ there exists a lattice whose packing density is $\geq \zeta(n)/2^{n-1}$, where $\zeta$ stands for the Riemann zeta-function. Unfortunately, the known proofs of Minkowski-Hlawka theorem are non-constructive, and for arbitrary dimensions constructions of lattices satisfying this bound are not known. On the other hand, the mere existence of this bound motivated various constructions of asymptotic families of lattices, one in every dimension, whose packing density comes as close as possible to Minkowski-Hlawka. One such family, which produces particularly nice results as~$n \to \infty$ are the so-called {\it function field lattices}, constructed by Tsfasman and Vladut (see~\cite{tv}, pp.~578--583).

We use notation of~\cite{stich}. The construction of function field lattices given in \cite{tv} is as follows. Let $F$ be an algebraic function field (of a single variable) with the finite field $\FF_q$ as its full field of constants. Let $\calP = \{ P_0,P_1,P_2,\ldots, P_{n-1}\}$ be the set of rational 
places of $F$. Corresponding to each place $P_i$, let $v_i$ denote the corresponding normalized discrete valuation and let  $\calO_\calP^*$ be the set of all nonzero functions  $f\in F$ whose divisor has support contained in the set $\calP$. Then $\calO_\calP^*$ is an abelian group, $\sum_{i=0}^{n-1} v_i(f) = 0$ for each $f \in \calO_\calP^*$, and we let
$$\deg f = \sum_{v_i(f) > 0} v_i(f) = \frac{1}{2} \sum_{i=0}^{n-1} |v_i(f)|.$$
Define the homomorphism
$\phi_\calP : \calO_\calP^* \to \ZZ^n$ (here $n= |\calP|$, the number of rational places of $F$) by $$\phi_\calP(f) = (v_0(f),v_1(f), \ldots, v_{n-1}(f)).$$ 
Then $L_\calP := \mathrm{Image}(\phi_\calP)$ is a finite-index sublattice of the root lattice
$$A_{n-1} = \left\{ \bx \in \zed^n : \sum_{i=0}^{n-1} x_i = 0 \right\}$$
with minimum distance 
\begin{equation} \label{mind}
d(L_\calP) \ge \min \left\{  \sqrt{2\deg f} : f \in \calO_\calP^*\setminus\FF_q \right\},
\end{equation} 
and 
\begin{equation} \label{det}
\det L_\calP \le \sqrt{n} h_F \le \sqrt{n} \left ( 1 + q  + \frac{n-q-1}{g} \right )^g,
\end{equation}
where $g$ is the genus of $F$ and $h_F$ is the divisor class number of $F$, that is, the size of the  group of divisor classes of $F$ of  degree 0, denoted by $\mathrm{Cl}^0(F)$. Here, as in \cite{tv}, we can identify $\ZZ^n$ with the set of all divisors with support in $\calP$ and $A_{n-1}$ with all such divisors of degree~$0$. We will often make use of this identification when working with lattice vectors by working with the corresponding divisors instead.

Equation~\eqref{delta} above indicates that to maximize the packing density one should take a lattice with the quotient of minimum distance to the determinant as large as possible. In the Tsfasman-Vladut construction above, this can be achieved when the quotient $n/g$ is relatively large, as indicated in~\cite{tv}. In particular, Tsfasman and Vladut consider families of curves  for which the packing density of  the corresponding lattices is asymptotically good as~$n$ grows. On the other hand, it is well known (see, for instance~\cite{martinet}) that lattices in~$\real^n$ with particularly high packing density are usually {\it well-rounded}, i.e., their sets of minimal nonzero vectors (with respect to Euclidean norm) contain $n$ linearly independent ones. This observation prompted us to ask the following natural question.

\begin{quest} \label{Q1} For which algebraic function fields $F$ is the corresponding lattice $L_{\calP}$ well-rounded?
\end{quest}

\noindent
The main goal of this note is to provide the following partial answer to this question.

\begin{thm} \label{main} Let $F$ be an algebraic function field over $\FF_q$ with $g=1$ and $n \geq 5$. Then the corresponding lattice $L_{\calP}$ is generated by its minimal vectors, and hence well-rounded.
\end{thm}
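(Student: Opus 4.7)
The plan is to identify the minimum distance and the minimal vectors of $L_\calP$ explicitly, and then to show that the sublattice $M\subseteq L_\calP$ they generate coincides with $L_\calP$, by constructing a surjective homomorphism from $E(\FF_q)$ onto $A_{n-1}/M$ and comparing cardinalities.

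First I would pin down $d(L_\calP)$. Since $g=1$, every nonconstant $f\in\calO_\calP^*\setminus\FF_q$ has $\deg f\geq 2$, so \eqref{mind} gives $d(L_\calP)\geq 2$. This bound is attained by a function with divisor $\mathrm{div}(f)=P_i+P_j-P_a-P_b$ on four distinct rational places; by Riemann--Roch on an elliptic curve such an $f$ exists iff $P_i+P_j=P_a+P_b$ in the group law of $E(\FF_q)$. A pigeonhole argument on the $\binom{n}{2}\geq 10$ unordered pairs against the $n$ possible group-law sums, together with the elementary observation that two distinct pairs with the same sum cannot share exactly one index, produces such a quadruple whenever $n\geq 5$. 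Thus $d(L_\calP)=2$, and the minimal vectors of $L_\calP$ are exactly the $\pm(e_i+e_j-e_a-e_b)$ for four distinct indices satisfying the group-law identity.

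Let $M$ be the sublattice of $L_\calP$ generated by these minimal vectors, let $\bar{e}_i$ denote the class of $e_i-e_0$ in $A_{n-1}/M$ (with $\bar{e}_0:=0$), and attempt to define $\eta:E(\FF_q)\to A_{n-1}/M$ by $\eta(P_i)=\bar{e}_i$. The $\ZZ$-linear extension will be a well-defined homomorphism as soon as $e_i+e_j-e_0-e_k\in M$ whenever $P_i+P_j=P_k$. When $0,i,j,k$ are pairwise distinct this vector is already a minimal vector and there is nothing to check; \emph{the main obstacle} is to realize the required element as a combination of minimal vectors in the three degenerate configurations: (C) $P_j=-P_i\neq P_i$, giving $e_i+e_j-2e_0\in M$; (D) $P_i$ is $2$-torsion with $i=j$ and $k=0$, giving $2e_i-2e_0\in M$; and (E) $i=j$ with $P_k=2P_i\neq O$, giving $2e_i-e_0-e_k\in M$. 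For each configuration I plan to exhibit the required element as a short telescoping sum of minimal vectors parametrized by an auxiliary index $c$; for instance in (E) the pair $u_1=e_i+e_c-e_0-e_{i+c}$ and $u_2=e_i+e_{i+c}-e_c-e_k$ sums to $2e_i-e_0-e_k$ whenever $c$ avoids $\{0,i,-i,2i\}$, while (D) uses two minimal vectors and (C) uses three (the latter also involving the symmetric index $-c$). Feasibility reduces to checking that a small exclusion set, built from $\{0,\pm i,2i\}$, the $2$-torsion subgroup $E[2]$, and (in (C)) the set of halves of $P_i$ in $E(\FF_q)$, leaves at least one admissible index in $\{1,\ldots,n-1\}$; the hypothesis $n\geq 5$, together with the elementary fact that $|E[2]|=4$ forces $4\mid n$ and hence $n\geq 8$, is just enough to cover every subcase.

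Once $\eta$ is verified as a well-defined homomorphism, it is automatically surjective because its image contains the generators $\bar{e}_1,\ldots,\bar{e}_{n-1}$ of $A_{n-1}/M$, giving $|A_{n-1}/M|\leq |E(\FF_q)|=n$. Combined with $M\subseteq L_\calP$ and $|A_{n-1}/L_\calP|=h_F=n$, this forces $M=L_\calP$. Since $L_\calP$ has rank $n-1$, its generation by the set of minimal vectors then exhibits $n-1$ linearly independent minimal vectors, establishing well-roundedness.
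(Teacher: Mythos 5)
Your argument is essentially correct, and its concluding step is genuinely different from the paper's. The paper first proves (Theorem~\ref{gen}) that $L_\calP$ is generated by the vectors $P+Q-R-Q_\infty$ with $\P+\Q=\R$, and then rewrites each non-minimal generator as a $\pm$-combination of two minimal vectors; you instead bypass the explicit generating set and close the argument by counting: the relations $e_i+e_j-e_0-e_k\in M$ make $\P_i\mapsto \bar{e}_i$ a surjective homomorphism $E(\FF_q)\to A_{n-1}/M$, and since $[A_{n-1}:L_\calP]=h_F=n=|E(\FF_q)|$ (which you should justify explicitly via the bijection $\Phi:\calP\to\mathrm{Cl}^0(E)$), the chain $M\subseteq L_\calP\subseteq A_{n-1}$ forces $M=L_\calP$. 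This buys independence from the generation statement of Theorem~\ref{gen}, at the cost of needing the class-number identification. The combinatorial core, however, is the same in both proofs: your degenerate configurations (C), (D), (E) are exactly the paper's non-minimal generators (its cases $P=Q$ and $R=Q_\infty$), and your auxiliary index $c$ avoiding $\{0,\pm i,2i\}$ in case (E) is precisely the paper's choice of a place $U$ with $\U\notin\{\Q_\infty,\P,-\P,2\P\}$. One caveat: your three-vector telescoping in case (C) accumulates an exclusion set (the $2$-torsion points, the solutions of $2\P_c=-2\P_i$ and of $2\P_c=-\P_i$, plus $\{\Q_\infty,\pm\P_i\}$) that is not obviously of size less than $n$ when, for instance, $n=6$ and $|E[2]|=2$; your observation that $|E[2]|=4$ forces $n\geq 8$ does not help in that subcase. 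This is easily repaired: with $\P_j=-\P_i$ and $\P_s:=\P_j+\P_c$, the two minimal vectors $(e_j+e_c-e_s-e_0)$ and $(e_i+e_s-e_c-e_0)$ already sum to $e_i+e_j-2e_0$, and the exclusion set shrinks to $\{\Q_\infty,\P_i,\P_j,2\P_i\}$, so $n\geq 5$ suffices --- this two-term decomposition is exactly what the paper uses.
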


We also investigate a variety of geometric properties of the lattice $L_{\calP}$ when $F$ has genus 1, i.e., when the underlying curve is elliptic, in particular establishing formulas for the minimum distance (Lemma~\ref{form}) and the number of minimal vectors (Theorem~\ref{min_num}) of $L_{\calP}$, and conclude with a non-trivial bound on the covering radii of such lattices (Theorem~\ref{cover}). In Section~\ref{prelim} we set the notation and prove several preliminary lemmas on elliptic curves and corresponding function fields, in particular obtaining an explicit description for a generating set of the lattice~$L_{\calP}$ in the case of elliptic curves (Theorem~\ref{gen}). We then prove our main results in Section~\ref{elliptic}. We are now ready to proceed.
\bigskip

\section{Notation and preliminary results}
\label{prelim}

In this section we establish some necessary preliminaries on elliptic curves. An elliptic curve is a pair $(E,O)$, where $E$ is a curve of genus 1 and $O\in E$. In this paper, the elliptic curves are always defined over a finite field $K: = \mathbb{F}_q$. 
 It can be shown \cite[Proposition 6.1.2]{stich} that if $\mathrm{char}(K) \ne 2$ then $K(E) = K(x,y)$, where $y^2 = f(x)$ and $f(x) \in K[x]$ is square-free of degree three, and if $\mathrm{char}(K) =2$ then $K(E)=K(x,y)$, where either $y^2 + y = f(x)$ (here
$f(x) \in K[x]$ has degree $3$) or $y^2 + y  = x + \frac{1}{ax+b}$ with $a,b\in K$ and $a\ne 0$.
 
Let $\calP$ denote the set of places of $K(E)$ of degree 1. There is a unique common pole of
$x$ and $y$ which we denote by $Q_\infty$. This place has degree 1 and so belongs to 
 $\calP$. 
 Define the map
$$\Phi: \calP \to \mathrm{Cl}^0(E)$$
by $\Phi(P) = [P-Q_\infty]$.
This map \cite[Proposition 6.1.7]{stich}  is a bijection and induces an (abelian) group structure on $\calP$: $P \oplus Q = \Phi^{-1}( \Phi(P) + \Phi(Q))$. The place $Q_\infty$ is the identity element of this group.
It follows that if $P$ and $Q$ are rational places, then $P-Q$ is a principal divisor if and only if $P=Q$. Thus the Riemann-Roch space $\calL(P-Q)$ has positive dimension if and only if  $P=Q$.

We need to distinguish between the operations of the group of divisors and the elliptic curve group law; and we also need to distinguish between places and their corresponding points on the elliptic curve. 
We do so as follows. Each place $P$ of $K(E)$ corresponds to a unique point  on the elliptic curve defined by any one of the above given equations. We denote the corresponding point in bold font $\P$.
Thus the sum $P+Q$ is a divisor of the function field $K(E)$ while  the sum $\P+\Q$ is really another point on $E$ according to  the elliptic curve group law.  Henceforth we assume that $K(E)= K(x,y)$ where $x$ and $y$ are related by any one of the defining equations above for an elliptic curve.

Suppose the degree 1 places of $K(E)$ are
$P_0,P_1,P_2, \ldots, P_{n-1}$ where $P_0 := Q_\infty$ is the unique common pole of $x$ and $y$. 
In accordance with the notation introduced above, $\mathcal{P}$ denotes the set of places $P_0,P_1,\ldots, P_{n-1}$.
For a place $P$, we denote by $P'$ the place of $K(E)$ corresponding to the additive inverse of
$\P$ (so $\P +  \P' = \Q_\infty$). Note that $x(\P) = x(\P')$.

We define $m(\P,\Q)$ to be the line through $\P$ and $\Q$ if both $P,Q\ne Q_\infty$, that is
$m(\P,\Q) = ax+by+c$ for some $a,b,c \in \mathbb{F}_q$ and the points $\P,\Q$ lie on this line. 
Note that if $Q=P$ ($\ne Q_\infty$) then $m(\P,\Q)$ is the tangent line to $E$ at the point $\P$.
If $Q = P'$ ($\ne Q_\infty$) then $m(\P,\Q) = x-x(\P) = x-x(\Q)$.
If $P= Q_\infty$ or $Q=Q_\infty$ then we define $m(\P,\Q) := 1 \in \mathbb{F}_q$.

If $P\ne Q_\infty$ and $Q\ne Q_\infty$  and $\P +  \Q =\R$ then
it is well known that $m(P,Q)$ has three points of intersection with 
the elliptic curve and  thus
$$( m(\P,\Q))  = P + Q  + R' - 3Q_\infty.$$
Here it is possible that $R' = Q_\infty$, in which case $Q=P'$.
If $Q=P'$, then
$$m(\P,\Q) = x-x(\P) = x-x(\Q)$$
and
$$ (m(\P,\Q)) = P + P' -2Q_\infty.$$
Thus, if $\P + \Q = \R$ and $R\ne Q_\infty$, it follows that
$$\left (  \frac{ m(\P,\Q)}{x-x(\R)} \right ) = P+Q-R-Q_\infty.$$

Suppose that $\P + \Q = \R$. Then we define the following function:
 $$F(\P,\Q) := \left \{ \begin{array}{ll}    
\frac{x-x(\R)}{m(\P,\Q)} & \mathrm{if} \, \P,\Q,\R\ne \Q_\infty \\
\frac{1}{m(\P,\Q)}    &  \mathrm{if} \, \P,\Q \ne\Q_\infty\, \mathrm{but} \, \R=\Q_\infty \\
1 & \mathrm{if}\, \P = \Q_\infty\, \mathrm{or}\, \Q = \Q_\infty.
\end{array} \right.$$
One easily checks in all three cases the divisor of $F(\P,\Q)$ is 
$$(F(\P,\Q)) =  -P-Q+R+Q_\infty.$$

We repeatedly use the result that if $D$ is a divisor and $f$ a function in an algebraic function field,
then $f\calL(D) = \calL(D-(f))$.

\begin{prop}\label{prop1}
Let  $P,Q$ be rational places of $K(E)$. Then for a rational place $R$ of $K(E)$, $\P  + \Q = \R$ if and only if $\calL(P+Q-R -Q_\infty) \ne 0$, in which case
$$\calL(P+Q-R-Q_\infty) = \span ( F(\P,\Q) ).$$
\end{prop}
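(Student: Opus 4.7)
The plan is to prove this by showing that $D := P + Q - R - Q_\infty$ is a divisor of degree zero, and then exploiting the well-known dichotomy that a degree-zero divisor has Riemann--Roch space of dimension either $0$ or $1$, with dimension $1$ precisely when the divisor is principal.

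For the forward implication, I would assume $\P + \Q = \R$ and invoke the explicit divisor computation carried out immediately before the statement of the proposition: in each of the three cases defining $F(\P,\Q)$ the paper has already verified that
\[
(F(\P,\Q)) = -P - Q + R + Q_\infty = -D.
\]
Hence $(F(\P,\Q)) + D = 0 \geq 0$, so $F(\P,\Q) \in \calL(D) \setminus \{0\}$, which shows $\calL(D) \neq 0$. To upgrade this to the equality $\calL(D) = \span(F(\P,\Q))$, I would observe that $\deg D = 0$ and that over any function field a divisor of degree $0$ satisfies $\ell(D) \leq 1$ (with equality exactly when $D$ is principal). Since $D = -(F(\P,\Q))$ is principal and $F(\P,\Q)$ is a nonzero element of $\calL(D)$, it must span the whole one-dimensional space.

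For the converse, suppose $\calL(D) \neq 0$ and pick any nonzero $f \in \calL(D)$. Then $(f) + D \geq 0$; since $\deg((f)) = 0 = \deg D$, this non-negative divisor has degree $0$ and hence must be the zero divisor. Therefore $(f) = -D$, so $D$ is principal. Now the bijection $\Phi : \calP \to \mathrm{Cl}^0(E)$, $\Phi(P) = [P - Q_\infty]$, identifies the elliptic curve group law with addition in $\mathrm{Cl}^0(E)$, and $D$ being principal is exactly the statement that
\[
[P - Q_\infty] + [Q - Q_\infty] = [R - Q_\infty],
\]
which via $\Phi$ translates to $\P + \Q = \R$.

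The argument is essentially a packaging step: the divisor computations for $F(\P,\Q)$ have already been dispatched before the proposition, and the key inputs (the Riemann--Roch inequality $\ell(D) \leq 1$ for $\deg D = 0$, and the bijection $\Phi$) are standard facts already cited from \cite{stich}. The main thing to be careful about is the degree-zero principality argument, namely that any nonzero function in $\calL(D)$ forces $(f) = -D$ exactly (not merely $(f) + D \geq 0$), which is what lets the converse go through without any case analysis on whether $R = Q_\infty$.
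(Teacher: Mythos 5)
Your proof is correct, and the forward direction (together with the spanning claim) matches the paper's implicit argument: $(F(\P,\Q)) = -D$ puts $F(\P,\Q)$ in $\calL(D)$, and $\ell(D)\le 1$ for $\deg D = 0$ forces equality. Where you genuinely diverge is the converse. The paper multiplies $\calL(D)$ by $1/F(\P,\Q)$ to reduce to a space of the form $\calL(S-R)$ and then splits into cases according to whether $P$ or $Q$ equals $Q_\infty$, using the previously established fact that $\calL(S-R)\ne 0$ forces $S=R$. You instead bypass $F(\P,\Q)$ entirely in this direction: any nonzero $f\in\calL(D)$ satisfies $(f)+D\ge 0$ with $\deg((f)+D)=0$, hence $(f)=-D$, so $D$ is principal, and principality of $P+Q-R-Q_\infty$ translates via the bijection $\Phi$ directly into $\P+\Q=\R$. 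This is cleaner in that it eliminates the case analysis and makes transparent that the proposition is really just the definition of the group law on $\mathrm{Cl}^0(E)$ read through $\Phi$; the paper's version has the mild advantage of staying entirely inside the explicit function-theoretic framework (the functions $F(\P,\Q)$ and the lines $m(\P,\Q)$) that it reuses in the subsequent results. Both arguments ultimately rest on the same two inputs --- the divisor computation for $F(\P,\Q)$ and the dichotomy $\ell(D)\in\{0,1\}$ for degree-zero divisors --- so either is a legitimate proof.
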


\begin{proof}
The forward implication is obvious. For the reverse implication, let $R$ be a rational place of $K(E)$ and suppose that 
$$\calL(P+Q-R-Q_\infty)  \ne 0.$$
We need to show that $\P + \Q = \R$. First suppose that $P,Q \neq Q_\infty$, then
$$\frac{1}{F(\P,\Q)}\calL(P+Q-R-Q_\infty)  = \calL(S-R),$$
where $\S$ is the additive inverse of the  third point of intersection of the line $m(\P,\Q)$ with the elliptic
curve~$E$ (it may happen that $S  = Q_\infty$). Since $\calL(S-R)$ 
has positive dimension, it follows that  $R=S$ and
$\calL(S-R) = \mathbb{F}_q$ so that
$$\calL(P+Q-R-Q_\infty) = \span ( F(\P,\Q) ).$$

If $P = Q_\infty$, then $\P+\Q = \Q$ and 
$\calL(P+Q-R-Q_\infty) = \calL(Q-R)$ is nontrivial by assumption and it follows that $R=Q$ so $\R = \P + \Q$
and $\calL(P+Q-R-Q_\infty)  = \span \{1 \} = \span ( F(\P,\Q) )$.

Likewise, the reverse implication is true if $\Q=\Q_\infty$.
\end{proof}

\begin{thm} \label{npinfty}
For an integer $n\ge 1$, 
$n\P  = \Q_\infty$
if and only if 
$$ \calL(nP-nQ_\infty) = \span \{ F(\P,\P)F(\P,2\P)\ldots F(\P,(n-1)\P) \}.$$
\end{thm}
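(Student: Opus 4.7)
The plan is to reduce the theorem to a direct divisor computation, exploiting the formula $(F(\P, \Q)) = -P - Q + R + Q_\infty$ (where $\R = \P \oplus \Q$) established just before Proposition~\ref{prop1}. I would set
\[
f := F(\P, \P)\, F(\P, 2\P) \cdots F(\P, (n-1)\P),
\]
and write $P_k$ for the rational place corresponding to the point $k\P$ (so $P_1 = P$, and $P_k = Q_\infty$ precisely when $k\P = \Q_\infty$). Applying the divisor formula to each factor gives $(F(\P, k\P)) = -P - P_k + P_{k+1} + Q_\infty$, and summing from $k = 1$ to $n-1$ telescopes cleanly to
\[
(f) = -nP + P_n + (n-1)Q_\infty.
\]

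With this single formula in hand, the forward direction is essentially immediate. If $n\P = \Q_\infty$ then $P_n = Q_\infty$, so $(f) = -(nP - nQ_\infty)$, showing that $f$ is a nonzero element of $\calL(nP - nQ_\infty)$. Since $nP - nQ_\infty$ has degree $0$ on a genus-$1$ curve, the Riemann-Roch space has dimension at most $1$ (with equality exactly when the divisor is principal), so $\calL(nP - nQ_\infty) = \span\{f\}$.

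The reverse direction is equally short. Assuming $\calL(nP - nQ_\infty) = \span\{f\}$, the function $f$ is a nonzero element of this space, so $(f) + nP - nQ_\infty \ge 0$. Substituting the computed divisor of $f$, this inequality reduces to $P_n - Q_\infty \ge 0$, which forces $P_n = Q_\infty$ (as $P_n$ is a single degree-$1$ place) and therefore $n\P = \Q_\infty$.

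The only step that will require some care is verifying that the divisor identity $(F(\P, k\P)) = -P - P_k + P_{k+1} + Q_\infty$ holds uniformly across the three cases of the piecewise definition of $F$, which matters because some intermediate multiple $k\P$ could happen to equal $\Q_\infty$ if $\P$ has order strictly less than $n$. In the case $P_k = Q_\infty$ one has $F(\P, k\P) = 1$ and $P_{k+1} = P$, so the formula gives zero on both sides; in the case $P_{k+1} = Q_\infty$, i.e., $k\P = \P'$, the factor is $1/(x - x(\P))$ with divisor $-P - P' + 2Q_\infty$, which again matches. Once this uniform validity is confirmed, the telescoping is valid unconditionally and the proof goes through as described.
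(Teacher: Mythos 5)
Your proof is correct, and it is a streamlined variant of the paper's argument rather than a copy of it. Both proofs ultimately rest on the same telescoping computation, but you apply it once, to the full product $f = F(\P,\P)\cdots F(\P,(n-1)\P)$, obtaining $(f) = -nP + P_n + (n-1)Q_\infty$, and then read off both implications from elementary divisor considerations: for the forward direction you observe that $f$ is a nonzero element of $\calL(nP-nQ_\infty)$ whose divisor is exactly $-(nP-nQ_\infty)$, and invoke the fact that a degree-zero divisor has $\ell(D)\le 1$ (this does not even need genus $1$); for the converse you use the membership inequality $(f)\ge -(nP-nQ_\infty)$ to force $P_n - Q_\infty \ge 0$, hence $P_n = Q_\infty$. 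The paper instead builds the forward direction multiplicatively, applying Proposition~\ref{prop1} to each step $\P + \P_{k-1} = \P_k$ to get a chain of one-dimensional Riemann--Roch spaces and then using the observation that $\calL(D_1)=\span\{f_1\}$, $\calL(D_2)=\span\{f_2\}$ implies $\calL(D_1+D_2)=\span\{f_1f_2\}$; and for the converse it divides by the partial product $F(\P,\P)\cdots F(\P,\P_{n-2})$ to reduce to $\calL(P+P_{n-1}-2Q_\infty)\ne 0$ and then quotes Proposition~\ref{prop1} once more. Your route avoids both of those auxiliary steps at the modest cost of verifying (as you correctly do) that the identity $(F(\P,k\P)) = -P - P_k + P_{k+1} + Q_\infty$ holds uniformly across the three cases in the definition of $F$, including when some intermediate multiple $k\P$ or $(k+1)\P$ equals $\Q_\infty$ --- a check the paper has in effect already recorded when it asserts $(F(\P,\Q)) = -P-Q+R+Q_\infty$ in all cases. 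Both arguments are complete; yours is shorter and makes the role of the divisor of $f$ more transparent, while the paper's version recycles Proposition~\ref{prop1} and the span-multiplication observation, which it also uses elsewhere (e.g., in the proof of Theorem~\ref{gen}).
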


\begin{proof}
For $n=1$ this result is trivial so we assume that $n>1$. For $k \ge 1$, we put $\P_k := k\P$. Observe that for $k\ge 2$,
$\P + \P_{k-1} = \P_k$, and so 
$$\calL(P+ P_{k-1} -P_k - Q_\infty) = \span \{ F(\P,\P_{k-1})\}.$$
We will use this fact repeatedly. Suppose that $n\P = \Q_\infty$. Then $\P + \P_{n-1} = \Q_\infty$, whence
$$\calL(P + P_{n-1}-2Q_\infty) = \span \{ F(\P,\P_{n-1}) \}.$$ 
Since $\P+\P_{n-i-1} = \P_{n-i}$ for $i=1,3,\ldots n-2$, the following identities are true:

\begin{eqnarray*}
\calL(P+P_{n-2}-P_{n-1} - Q_\infty)  &  =   &    \span \{  F(\P,\P_{n-2}) \} \\
\calL(P+P_{n-3}-P_{n-2} - Q_\infty)  &  =   &    \span \{ F(\P,\P_{n-3}) \}  \\
  &\vdots&  \\
\calL(P+P-P_{2}-Q_\infty)              &       =   &   \span \{ F(\P,\P) \}.\\
\end{eqnarray*}

\noindent
Notice that  if $\calL(D_1) = \span \{ f_1 \}$ and  $\calL(D_2) = \span \{ f_2 \}$ then $\calL(D_1 + D_2) = \span \{ f_1f_2 \}$. Combining this observation with the above identities, we obtain
\begin{equation}
\label{LFP}
\calL(nP-nQ_\infty) = \span \{ F(\P,\P) F(\P,\P_2) \ldots F(\P,\P_{n-1}) \}.
\end{equation}

On the other hand, assume that~\eqref{LFP} holds. Since the divisor of
$$F(\P,\P) F(\P,\P_2) \ldots F(\P,\P_{n-2})$$
is 
\begin{eqnarray*}
(-P - P + P_2 + Q_\infty) + (-P - P_2 + P_3+Q_\infty) \\
+ \ldots + (-P - P_{n-2} + P_{n-1} +Q_\infty) \\
= -(n-1)P +P_{n-1} + (n-2)Q_\infty,
\end{eqnarray*}
we have that
$$\frac{1}{F(\P,\P) F(\P,\P_2) \ldots F(\P,\P_{n-2}) } \calL(nP-nQ_\infty)
= \calL( P+P_{n-1} - 2Q_\infty)$$
is nontrivial. By Proposition \ref{prop1} it follows that $\P+\P_{n-1} = \Q_\infty$, that is,
$n\P = \Q_\infty$, as required.
\end{proof}

\begin{thm}\label{gen}
Let 
$$D := rQ_\infty + \sum_{i=1}^{n-1} a_i P_i$$
be a divisor of degree $0$. Then $D$ is principal if and only if
$$\sum_{i=1}^{n-1} a_i \P_i = \Q_\infty.$$
If $D$ is principal, then $D=(f)$, where $f$ is the product of functions of the form
$F(\P,\Q)$ with $P,Q\in \calP$. The group $\calO_\calP^*$ is generated by the functions $F(\P,\Q)$ where $\P,\Q\in \calP$. 
Consequently, the lattice $L_\calP$ is generated by vectors of the form $P+Q-R-Q_\infty$ where $\P + \Q = \R$.
\end{thm}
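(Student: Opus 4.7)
The plan is to handle the three assertions of Theorem~\ref{gen} in order, with the substantive work being an induction for the middle one. The first equivalence is immediate from the bijection $\Phi$: since $\deg D = 0$ gives $r = -\sum_{i \ge 1} a_i$, we can rewrite $[D] = \sum_{i \ge 1} a_i\,[P_i - Q_\infty] = \sum_{i \ge 1} a_i\,\Phi(P_i) \in \mathrm{Cl}^0(E)$; bijectivity of $\Phi$ together with the definition of $\oplus$ (with identity $\Q_\infty$) translates vanishing of this class into $\bigoplus_{i \ge 1} a_i\,\P_i = \Q_\infty$.

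For the second assertion I would induct on $T(D) := \sum_{i=1}^{n-1} |a_i|$. The base cases are easy: $T(D) = 0$ forces $D = 0$ by the degree hypothesis, while if exactly one coefficient $a_i$ with $i \ne 0$ is nonzero, Theorem~\ref{npinfty} (applied to $|a_i|\P_i = \Q_\infty$, with the resulting product inverted if $a_i < 0$) exhibits $D$ as the divisor of an explicit product of $F$-values. For $T(D) \ge 2$, pick indices $i \ne j$ with $i,j \ne 0$ and $a_i, a_j \ne 0$, and split into two cases. If $a_i$ and $a_j$ have the same sign (WLOG both positive, else negate $D$), set $\R = \P_i \oplus \P_j$ and replace $D$ by $D + (F(\P_i, \P_j))$; if opposite signs (say $a_i > 0 > a_j$), set $\R = \P_i \oplus \P_j'$ so that $\P_j \oplus \R = \P_i$, and replace $D$ by $D - (F(\P_j, \R))$. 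Using the divisor formula $(F(\P, \Q)) = -P - Q + R + Q_\infty$, each modification drops both $|a_i|$ and $|a_j|$ by $1$ while shifting $|a_R|$ by $\pm 1$, and the $Q_\infty$-coefficient does not contribute to $T$; hence $\Delta T \le -1$. Bijectivity of $\Phi$ together with the hypotheses $i,j \ne 0$ and $i \ne j$ rules out the dangerous degenerations $R = P_i$ and $R = Q_\infty$, while the remaining special case $R = P_j$ (which arises in the opposite-sign setting precisely when $\P_i = 2\P_j$) only strengthens the bound.

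The last two claims follow immediately. Any $f \in \calO_\calP^*$ has its divisor $(f)$ expressible by the previous step as $\bigl(\prod_k F(\P_k, \Q_k)^{\epsilon_k}\bigr)$, so $f$ itself equals that product up to a scalar in $\FF_q^*$; the scalar is killed upon passing to the image under $\phi_\calP$, and since $\phi_\calP(F(\P, \Q)^{-1}) = P + Q - R - Q_\infty$ whenever $\R = \P \oplus \Q$, these vectors generate $L_\calP$. The main obstacle I foresee is the case analysis in the inductive step: one must verify carefully that the auxiliary place $R$ does not coincide with $P_i$ or $P_j$ in a way that obstructs the strict decrease of $T$, and here the bijectivity of $\Phi$ is the crucial ingredient.
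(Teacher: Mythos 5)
Your argument is correct, but it is organized rather differently from the paper's proof, and the comparison is instructive. The paper first forces all coefficients $a_i$ ($i\ge 1$) to be nonnegative by multiplying $\calL(D)$ by powers of $T_{k_j}(\P_j)^{-1}$, where $k_j$ is the order of $\P_j$ (this is where Theorem~\ref{npinfty} enters), and then writes down a single explicit telescoping product $f = F(\Q_{t-1},\Q_t)F(\Q_{t-2},\T_1)\cdots F(\Q_1,\T_{t-2})$ built from the partial sums $\T_i$, reducing $\calL(D)$ to $\calL(T_{t-1}-Q_\infty)$ in one stroke; the equivalence ``$D$ principal $\iff \sum a_i\P_i=\Q_\infty$'' then falls out of this computation. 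You instead prove that equivalence directly and cleanly from the fact that $\Phi$ is a group isomorphism onto $\mathrm{Cl}^0(E)$ (which is arguably the more transparent route), and you replace the paper's two-stage construction by a single induction on $\sum_{i\ge1}|a_i|$ that strips off one $F(\cdot,\cdot)^{\pm1}$ at a time; this handles mixed signs symmetrically, avoids any reference to the orders of the points, and only needs Theorem~\ref{npinfty} in the one-place base case. The price is a fussier case analysis in the inductive step, and the functions you produce are products of $F$'s \emph{and their inverses} rather than products of $F$'s alone as the paper's statement literally asserts (one can repair this, e.g.\ via $(F(\R,\P')/F(\P,\P')) = P+Q-R-Q_\infty$ when $\P\oplus\Q=\R$, but for the group-generation and lattice conclusions inverses are of course harmless). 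Your handling of the constants $\FF_q^*$ being killed by $\phi_\calP$ is if anything more careful than the paper's.

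One small inaccuracy in your case analysis: in the equal-sign case the degeneration $\R=\Q_\infty$ (which occurs exactly when $\P_j=\P_i'$) is \emph{not} excluded by $i,j\ne 0$ and $i\ne j$. It is merely harmless: both $|a_i|$ and $|a_j|$ still drop by $1$, and the $+2$ lands on the $Q_\infty$-coefficient, which does not contribute to $T$, so $\Delta T=-2$. Similarly, when $R=P_j$ in the opposite-sign case with $a_j=-1$ one gets $\Delta T=-1$ rather than a strict improvement. Neither point affects the validity of the induction, since $\Delta T\le -1$ holds in every case.
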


\begin{proof}
We can assume without loss of generality that $a_i\ge 0$ for $1\le i \le n-1$.  Indeed, for a place $P\in \calP$ and integer $k \ge 2$, let 
$$T_k(\P) :=   F(\P,\P)F(\P,2\P)\ldots F(\P,(k-1)\P).$$
Suppose that $a_j<0$ and let $k_j$ be the order of the point $\P_j$. 
By Theorem \ref{npinfty}, the divisor of $T_{k_j}(P_j)$ is $-k_j P_j + k_j Q_\infty$. Therefore  
$$\left ( \frac{1}{T_{k_j} (\P_j)}  \right )^{\ell } \calL(D) = \calL( D'),$$
where
$$D' :=  (r-\ell k_j) Q_\infty + \sum_{i=1,i \ne j }^{n-1} a_i P_i + (a_j+\ell k_j) P_j$$
and $a_j + \ell k_j \ge 0$ for sufficiently large $\ell$. Moreover, $D'$ is a principal divisor if and only if $D$ is a principal divisor and 
$$ \sum_{i=1,i \ne j }^{n-1} a_i \P_i + (a_j+\ell k_j) \P_j
= \sum_{i=1 }^{n-1} a_i \P_i .$$

Now write
$$D = rQ_\infty + Q_1+Q_2 + \ldots + Q_t,$$
where repetitions among the $Q_i$'s are allowed and $t=-r$. Put 
$$S_i := Q_{t-i} + Q_{t-i+1} + \ldots + Q_t,\ \ \T_i := \Q_{t-i} + \Q_{t-i+1} + \ldots +  \Q_t.$$
In accordance with the notation above, $T_i$ is the place corresponding to the point~$\T_i$. 
Put
$$ f :=  F(\Q_{t-1},\Q_t) F(\Q_{t-2},\T_1)F(\Q_{t-3},\T_2)\ldots F(\Q_1,\T_{t-2}). $$
We claim that 
$$ \frac{1}{f} \calL(D) = \calL( -Q_\infty + T_{t-1}).$$
This follows from the fact that the divisor of the function $\frac{1}{f}$ is
\begin{eqnarray*}
&\ & (Q_{t-1} +  Q_t - T_1 - Q_\infty) + (Q_{t-2} + T_1  - T_2 - Q_\infty) \\
& +& (Q_{t-3}+ T_2 - T_3 - Q_\infty)  +  \ldots +(Q_1+ T_{t-2}  - T_{t-1} - Q_\infty) \\
& = & Q_t + Q_{t-1} + ... + Q_1    -T_{t-1} - (t-1)Q_\infty,
\end{eqnarray*}
and
$$D - (1/f) = -Q_\infty + T_{t-1}.$$
The result now follows, since the divisor $-Q_\infty + T_{t-1}$ is principal
if and only if $\T_{t-1} = \Q_\infty$, that is, $\Q_1 + \Q_2 + \ldots + \Q_t = \Q_\infty$. Furthermore, 
$-Q_\infty+T_{t-1}$ is principal if and only if $ \frac{1}{f} \calL(D) = \calL( -Q_\infty + T_{t-1}) = \span\{1\}$, that is,
$\calL(D) = \span\{ f\}$.

The remaining statement of the theorem now follows quickly. Note that each function $F(\P,\Q)$ has  its support in $\calP$, that is $F(\P,\Q) \in \calO_\calP^*$.  Further observe that  the set $\calO_\calP^*$ is the union of all $\calL(D)\setminus \{0\}$ where 
$D$ runs over all principal divisors with support in $\calP$. From the above, we see that 
$\calL(D)$ is the span of  products of functions of the form $F(\P,\Q)$ where $P,Q\in \calP$. This completes the proof.
\end{proof}
\bigskip

\section{Lattices from elliptic curves}
\label{elliptic} 

We are now ready to prove our main results. We first establish an explicit value for the minimum distance of the lattice~$L_{\calP}$ in case of elliptic curves.

\begin{lem}\label{form}
Suppose that $n\ge 4$. Then the minimum distance of $L_\calP$ is $2$ and 
the minimal vectors of $L_\calP$ are of the form
$P+Q-R-S$ where $P,Q,R,S\in \calP$ are distinct and $\P+\Q = \R+\S$. 
If $n=3$ then the minimum distance of $L_\calP$ is $\sqrt{6}$ and the minimal vectors are of the form $\pm(P+Q-2Q_\infty)$, $\pm(P -2 Q +Q_\infty)$ and $\pm(-2P + Q + Q1_\infty$
where $\calP = \{ P,Q,Q_\infty\}$.
\end{lem}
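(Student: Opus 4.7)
The plan is to identify each lattice vector in $L_\calP$ with a principal divisor supported in $\calP$ via Theorem~\ref{gen}, which characterizes such divisors $D = \sum a_i P_i$ as those satisfying $\sum a_i \P_i = \Q_\infty$ in the elliptic curve group law. I would then enumerate integer vectors in $A_{n-1}$ by increasing squared Euclidean norm and test which ones realize this principality condition. The key observation is that the squared norm of a lattice vector equals $\sum a_i^2$, so the minimum distance problem reduces to a constrained integer-valued optimization.

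For the lower bound, integer vectors in $A_{n-1}$ of squared norm $\le 3$ are severely restricted: squared norms $1$ and $3$ are impossible because an odd number of $\pm 1$ entries cannot sum to zero, while squared norm $2$ forces $e_i - e_j$ with $i \ne j$, corresponding to the divisor $P_i - P_j$. This is not principal by the preliminary observation that $\calL(P - Q) \ne 0$ if and only if $P = Q$. A vector of squared norm $4$ in $A_{n-1}$ must then have the shape $e_i + e_j - e_k - e_l$ with $i, j, k, l$ pairwise distinct, since any single $\pm 2$ entry alone cannot sum to zero, and $(2, -1, -1, 0, \ldots)$-type patterns have squared norm $6$.

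For the upper bound when $n \ge 4$, I would invoke pigeonhole on the group of $n$ rational places: the set of $\binom{n}{2}$ unordered pairs $\{P, Q\}$ of distinct places of $\calP$ is mapped to the $n$-element group by $\{P,Q\} \mapsto \P+\Q$, and since $\binom{n}{2} > n$ for $n \ge 4$, two distinct pairs $\{P,Q\} \ne \{R,S\}$ must share the same sum $\P+\Q = \R+\S$. These pairs are automatically disjoint, for if they shared an element, say $P = R$, then cancellation in the group would force $\Q = \S$ and so $Q = S$, contradicting $\{P,Q\} \ne \{R,S\}$. This supplies four distinct places yielding a principal divisor $P+Q-R-S$ of squared norm exactly $4$, matching the lower bound; moreover, the argument simultaneously shows every squared-norm-$4$ lattice vector has the claimed shape.

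For $n = 3$, four distinct places do not exist, so squared norm $4$ is unattainable; solving $a^2 + ab + b^2 = 5/2$ (obtained by substituting $c = -a-b$ into $a^2+b^2+c^2 = 5$) has no integer solutions, ruling out squared norm $5$; and at squared norm $6$ the integer solutions of $a^2 + ab + b^2 = 3$ produce exactly the six candidate vectors listed in the lemma. Each is principal because in the cyclic group $\ZZ/3\ZZ$ the two non-identity elements are mutual inverses with $\P + \Q = \Q_\infty$, so the three principality conditions $\P+\Q=\Q_\infty$, $\P=2\Q$, and $\Q=2\P$ all hold simultaneously and Theorem~\ref{gen} applies. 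The main obstacle, modest as it is, lies in this $n=3$ case: rather than invoking the pigeonhole construction, one must verify principality directly for all six minimal vectors and carefully rule out every smaller squared norm by elementary number theory.
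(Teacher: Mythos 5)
Your proof is correct, and while it follows the same overall skeleton as the paper (lower bound, explicit norm-$2$ vector, characterization, separate $n=3$ analysis), the execution differs at each step in ways worth noting. For the lower bound the paper invokes the general inequality $d(L_\calP) \ge \min\{\sqrt{2\deg f}\}$ together with $\deg f \ne 1$, whereas you enumerate $A_{n-1}$-vectors of squared norm $\le 3$ directly; these are equivalent, but yours is self-contained and makes transparent why squared norm $4$ forces the shape $P+Q-R-S$ with four distinct places. For the upper bound the paper explicitly selects $\P,\Q \ne \Q_\infty$ with $\Q \ne \P'$ and exhibits the function $F(\P,\Q)$ (which implicitly needs a small case check on the group structure when $n=4$); your pigeonhole on the $\binom{n}{2} > n$ pair-sums is non-constructive but handles all $n \ge 4$ uniformly and, as you observe, doubles as the classification of all squared-norm-$4$ lattice vectors via the principality criterion of Theorem~\ref{gen} (the paper instead routes this through Proposition~\ref{prop1}). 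Your $n=3$ treatment is actually more rigorous than the paper's ``one easily checks'': ruling out squared norm $5$ via the non-integrality of $a^2+ab+b^2 = 5/2$ and solving $a^2+ab+b^2=3$ exactly pins down the six vectors, and the verification that all three principality conditions hold in $\ZZ/3\ZZ$ is the detail the paper elides. Both approaches ultimately rest on the same two pillars: $P-Q$ is principal only for $P=Q$, and the group-law criterion for principality of divisors supported on $\calP$.
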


\begin{proof}
Since a divisor $P-Q$ is principal if and only if $P=Q$, it follows that $\deg f \neq 1$ for any $f \in K(E)$.  First assume that $n\ge 4$. Then there are two distinct points $\P,\Q$, both not equal to $\Q_\infty$, such that $\P \ne \Q'$. Hence $\P + \Q = \R$ where $\R\ne \P,\Q,\Q_\infty$. The divisor
of the function $F(\P,\Q)$ is $-P- Q +R+Q_\infty$, so $d(L_\calP) \le 2$. On the other hand,~\eqref{mind} guarantees that $d(L_\calP) \ge  2$. Thus $d(L_\calP) = 2$.

Now consider a minimal vector $v$ of $L_\calP$. Then $v$ must be of the form
$P+Q-R-S$ where $P,Q,R,S$ are distinct rational places. Note also that 
$P+Q-R-S$ is a principal divisor.
Suppose that $\P+\Q = \R_1$. Then $P+Q-R_1-Q_\infty$ is a principal divisor and so 
is $(P+Q-R_1-Q_\infty) - (P+Q-R-S) = R+S-R_1-Q_\infty$. From Proposition \ref{prop1} we see
that $\R+\S =\R_1$. Thus the minimal vectors of $L_\calP$ are of the form
$P+Q-R-S$ where $P,Q,R,S\in \calP$ are distinct and $\P+\Q = \R+\S$. 

Next assume that $n=3$. Then $\calP = \{ Q_\infty, P, Q\}$ where $\Q= 2\P$. 
The following are vectors of $L_\calP$:    $3P-3Q_\infty$, $3Q-3Q_\infty$,
 $2P-Q-Q_\infty$, $P-2Q+Q_\infty$. Thus if $a_1P+b_1Q+c_1Q_\infty$ is a lattice vector, then
so is $a_2P+ b_2Q+ c_2Q_\infty$ where $a_2 = a_1 \mod 3$ and $b_2 = b_1 \mod 3$ and 
$c_2 = -a_2-b_2$.
One easily checks that  the only possibilities for the minimum vectors are
 $\pm(P+Q-2Q_\infty)$, $\pm(P -2 Q +Q1)$ and $\pm(-2P + Q + Q1)$, so $d(L_\calP) = \sqrt{6}$.
\end{proof}
\smallskip

Next we prove a formula for the number of minimal vectors in~$L_{\calP}$.

\begin{thm} \label{min_num}
Assume that $n\ge 4$ and let $\epsilon$ denote the number of 2-torsion points of $E$. 
Then the number of minimal vectors in $L_\calP$ is 
\begin{equation}\label{numminvecs}
\frac{n}{\epsilon} \cdot \frac{(n-\epsilon)(n-\epsilon-2)}{4} + 
\left (n-\frac{n}{\epsilon} \right)\cdot \frac{n(n-2)}{4}.
\end{equation}
\end{thm}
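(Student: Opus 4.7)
The plan is to use Lemma~\ref{form} to reduce the count to pure group-theoretic combinatorics on $\calP$. By that lemma, a minimal vector of $L_\calP$ is of the form $P+Q-R-S$ with $P,Q,R,S\in\calP$ pairwise distinct and $\P+\Q=\R+\S$. Such a vector uniquely determines and is determined by the ordered pair of unordered $2$-element subsets $(\{P,Q\},\{R,S\})$, namely the positive and negative parts of its support; the reverse ordering produces the negative vector. I would therefore count such ordered pairs directly.

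The strategy is to stratify by the common group-law sum $\A:=\P+\Q=\R+\S\in\calP$. For fixed $\A$, let $N(\A)$ denote the number of unordered pairs $\{\P,\Q\}\subset\calP$ with $\P\ne\Q$ and $\P+\Q=\A$. A key observation is that any two distinct pairs summing to $\A$ are automatically disjoint: if $\{\P,\Q\}$ and $\{\R,\S\}$ both sum to $\A$ and share a point, say $\P=\R$, then $\Q=\A-\P=\A-\R=\S$, so the pairs coincide. Consequently the number of ordered pairs of distinct unordered pairs summing to $\A$ is exactly $N(\A)(N(\A)-1)$, and each such ordered pair yields exactly one minimal vector (its reverse yielding the negative). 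Thus the total count will be
\[
\sum_{\A\in\calP} N(\A)\bigl(N(\A)-1\bigr).
\]

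To evaluate $N(\A)$ I would invoke the doubling homomorphism $[2]:\calP\to\calP$, whose kernel $E[2]\cap\calP$ has order $\epsilon$, so that the image $2\calP$ has order $n/\epsilon$, and every $\A\in 2\calP$ has exactly $\epsilon$ preimages while every $\A\notin 2\calP$ has none. Since ordered pairs $(\P,\Q)$ with $\P+\Q=\A$ are parametrized by the free choice of $\P$ (with $\Q:=\A-\P$), and $\P=\Q$ precisely when $2\P=\A$, we obtain
\[
N(\A)=\begin{cases}(n-\epsilon)/2 & \text{if } \A\in 2\calP,\\ n/2 & \text{if } \A\notin 2\calP.\end{cases}
\]
Substituting into the sum, the $n/\epsilon$ elements of $2\calP$ contribute $\frac{n}{\epsilon}\cdot\frac{(n-\epsilon)(n-\epsilon-2)}{4}$, and the remaining $n-n/\epsilon$ elements contribute $\bigl(n-\tfrac{n}{\epsilon}\bigr)\cdot\frac{n(n-2)}{4}$, matching~\eqref{numminvecs}.

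The only substantive point requiring care is the disjointness observation that turns the local count into the clean product $N(\A)(N(\A)-1)$, since without it one would face an inclusion--exclusion for pairs sharing a point; fortunately this comes for free from the cancellation law in the group $\calP$. The rest is elementary structure theory of the doubling endomorphism of a finite abelian group, and does not require any further function-field input.
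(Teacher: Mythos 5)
Your proof is correct and follows essentially the same route as the paper: stratify the minimal vectors $P+Q-R-S$ by the common sum $\A=\P+\Q=\R+\S$, use the doubling map to determine how many unordered pairs sum to a given $\A$ in each of the two cases $\A\in 2\calP$ and $\A\notin 2\calP$, and multiply. Your explicit remark that distinct pairs summing to the same $\A$ are automatically disjoint (so the local count is exactly $N(\A)(N(\A)-1)$) is a point the paper uses only implicitly, and it is a welcome clarification.
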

\begin{proof}
Define the homomorphism $\tau:E \to E$ by $\tau(\P) = 2\P$. Then the kernel of $\tau$ is the set
$E[2]$ of 2-torsion points of $E$ and the image of $\tau$  has $n/\epsilon$ points.  

Fix a point $\A$ of $E$. First we count the number of solutions to the equation
$\P + \Q = \A$ where $\P,\Q $ are distinct points of $E$.  Observe that 
$\P = \Q$ if and only if $\A \in \mathrm{Image}(\tau)$. 

If $\A \in \mathrm{Image}(\tau) $ there are $\epsilon$ solutions $\P$ to $2\P = \A$.  
Thus there are $n-\epsilon$ possible points $\P$ such that
$\Q := \A-\P \ne \P$, and so there are $(n-\epsilon)/2$ pairs $\P,\Q$ such that $\P + \Q = \A$
and $\P \ne \Q$. Hence the number of pairs $\R,\S$, disjoint from $\{\P,\Q\}$, such that $\R + \S = \A$, is $(n-\epsilon -2)/2$.
In total, there are $(n-\epsilon)/2 \cdot (n-\epsilon -2)/2 = (n-\epsilon)(n-\epsilon-2)/4$
possible minimal vectors $P+Q-R-S$ such that $\P+\Q = \A = \R + \S$.
The size of the image of $\tau$ is $\frac{n}{\epsilon}$  so the total number of
possible minimal vectors $P+Q-R-S$ such that $\P+\Q = \A = \R + \S$ with $\A \in \mathrm{Image}(\tau)$ is
$\frac{n}{\epsilon} \cdot \frac{(n-\epsilon)(n-\epsilon-2)}{4}.$

 If $\A  \not \in \mathrm{Image}(\tau) $ there are no solutions $\P$ to $2\P = \A$.  
Then similar reasoning as above shows that there are 
$(n-\frac{n}{\epsilon})\cdot \frac{n(n-2)}{4}$  minimal vectors $P+Q-R-S$ with
$\P + \Q   \not \in \mathrm{Image}(\tau) $. Thus by the above argument and Lemma~\ref{form}, the number of minimal vectors of $L_\calP$ is given by (\ref{numminvecs}).
\end{proof}
\smallskip

We are now ready to prove our main result, which is just a restatement of Theorem~\ref{main}.

\begin{thm} \label{main-1}
Suppose that $E$ has at least 5 points. Then the lattice $L_\calP$ is generated by its minimal vectors. In particular, this means that it is well-rounded.
\end{thm}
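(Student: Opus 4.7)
The plan is to combine Theorem~\ref{gen} with Lemma~\ref{form}. By Theorem~\ref{gen}, $L_\calP$ is generated over $\ZZ$ by vectors of the form
$$w = P + Q - R - Q_\infty \qquad \text{with} \qquad \P + \Q = \R,$$
so it suffices to show that each such $w$ lies in the sublattice $L' \subseteq L_\calP$ generated by the minimal vectors. When $P, Q, R, Q_\infty$ are four pairwise distinct places, the relation $\P + \Q = \R + \Q_\infty$ together with Lemma~\ref{form} identifies $w$ itself as a minimal vector, so $w \in L'$. After discarding the configurations that force $w = 0$ (which happen exactly when $P$ or $Q$ coincides with $Q_\infty$ or with $R$), only three nontrivial degenerate cases remain: (a) $R = Q_\infty$ and $Q = P'$ with $P \ne P'$, giving $w = P + P' - 2 Q_\infty$; (b) $P = Q$ with $\P$ not a $2$-torsion point, giving $w = 2P - R - Q_\infty$ with $\R = 2\P$; and (c) $P = Q$ with $\P$ a nontrivial $2$-torsion point, giving $w = 2P - 2 Q_\infty$.

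To handle these I would introduce an auxiliary rational place $A$ and let $T$ denote the place with $\T = \P + \A$. In case (a), set $m_1 = P + A - T - Q_\infty$ and $m_2 = P' + T - A - Q_\infty$; since $\P' + \T = \A$, Proposition~\ref{prop1} shows that both $m_1$ and $m_2$ are principal, and direct cancellation gives $m_1 + m_2 = w$. In case (b), set $m_1 = P + A - T - Q_\infty$ and $m_2 = P + T - A - R$; the identity $\P + \T = 2\P + \A = \R + \A$ makes $m_2$ principal, and again $m_1 + m_2 = w$. In case (c), the hypothesis $2\P = \Q_\infty$ forces $\P + \T = \A$, so both $m_1 = P + A - T - Q_\infty$ and $m_2 = P + T - A - Q_\infty$ are principal, with $m_1 + m_2 = w$.

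The hard part will be ensuring that an auxiliary place $A$ satisfying all the distinctness conditions required to make the $m_i$ minimal vectors actually exists. A direct case-by-case inspection shows that the set that $A$ must avoid has cardinality at most four: in case (a) it is contained in $\{P, P', Q_\infty, S\}$, where $S$ denotes the place whose associated point equals $-2\P$; in case (b) it is contained in $\{P, P', Q_\infty, R\}$; and in case (c) it collapses to $\{P, Q_\infty\}$ because $P' = P$. Hence the hypothesis $n \ge 5$ is precisely what is needed to guarantee a valid choice of $A$, with sharpness illustrated by elliptic curves with $E(\FF_q) \cong \ZZ/4\ZZ$, for which the forbidden places in case (b) would exhaust all rational points. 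Once each generator of $L_\calP$ has been represented as an integer combination of minimal vectors, we conclude $L' = L_\calP$; consequently the minimal vectors span $L_\calP \otimes_\ZZ \RR$ and therefore contain $\mathrm{rank}(L_\calP) = n - 1$ linearly independent representatives, which is the well-roundedness of $L_\calP$.
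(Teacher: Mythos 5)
Your proposal is correct and takes essentially the same approach as the paper's proof: reduce to the generators $P+Q-R-Q_\infty$ supplied by Theorem~\ref{gen}, and in each degenerate configuration introduce an auxiliary rational place avoiding at most four points so as to split the generator into a sum of two minimal vectors. The paper organizes this as two cases ($P=Q$ and $R=Q_\infty$) rather than your three, absorbing your $2$-torsion case (c) into the $P=Q$ branch, but the decompositions coincide up to sign and a symmetric relabeling of which place the auxiliary point is attached to.
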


\proof
We know from Theorem \ref{gen} that the lattice $L_\calP$ is generated by nonzero vectors of the form 
$v := -P-Q+R+Q_\infty$ where $\P + \Q = \R$.  It suffices to show that each such vector can be
written in terms of minimal vectors. Suppose that $v$ is not a minimal vector, that is, suppose that
$P,Q,R,Q_\infty$ are not all distinct. 
Notice that, since $v$ is a nonzero principal divisor, it cannot happen that $P$ or $Q$ equals~$Q_\infty$.  Similarly, it also cannot happen that  $P=R$  or $Q=R$. Thus one of the following must be true: $P=Q$ or $R=Q_\infty$.

Suppose that  $P=Q$. Then $v = -2P + R + Q_\infty$  and  $2\P = \R$. 
Since $E$ has at least five points, we can choose a rational place $U$ such that $\U$ is not any of 
$ \Q_\infty,\P, 2\P$ or $-\P$.
Put $\S := \P + \U$ and 
observe that 
$$
-2P + R + Q_\infty
=(-P- U + S+ Q_\infty) - (P + S - R - U)$$
We claim that $-P- U + S+ Q_\infty$ and  $P + S - R - U$ are minimal vectors.

By choice $U \ne P, Q_\infty$. Also $U\ne S$ otherwise $\P = \Q_\infty$.
Further, $S\ne P$ otherwise $U=Q_\infty$. Finally, $S\ne Q_\infty$ otherwise
$\P + \U = \Q_\infty$ whence $\U = -\P$, which is not true.  Thus $-P-U+S+Q_\infty$ is a minimal vector.

Observe that $\P + \S = 2\P + \U  = \R+\U$  so $P+S-R-U$ is a lattice vector.
We already know that $S,P,U$ are distinct. We must show that none of $S,U$ equals $R$ 
(we pointed out above that $R\ne P$). If $R = S$ then $\U=\P$, which is not possible. 
If $R=U$ then $\U = \R = 2\P$, which is not possible. Thus $P+S-R-U$ is a minimal vector. 

We have shown that if $P=Q$, then $v$ is the difference of two minimal vectors.

Next assume that $R=Q_\infty$, so $v= -P-Q+2Q_\infty$ and $\P + \Q = \Q_\infty$.
Since $E$ contains at least 5 rational places, we can choose a rational point $\U$ different from the 
points  $\Q_\infty, \P, \Q, 2\P$.
Put $\S := \Q + \U$ and note that $U\ne S$ otherwise $Q=Q_\infty$.
Also $Q+U-S-Q_\infty$ is a lattice point
and $\P + \S = \P + \Q + \U = \U$ so that   $P+S-U-Q_\infty$ is also a  lattice point.
Now
$$v = P+Q-2Q_\infty = (Q+U-S-Q_\infty) + (P + S-U-Q_\infty)$$
is the sum of two lattice points. 
We claim that $Q+U-S-Q_\infty$ and  $P + S-U-Q_\infty$ are minimal vectors.

First we show that $Q+U-S-Q_\infty$ is a minimal vector. We must show that the places
$Q,U,S,Q_\infty$ are distinct.
By our choice $U\ne Q,Q_\infty$. We already pointed out that $U\ne S$. It is not possible for
$Q=S$, for otherwise $\Q= \S = \Q + \U$ whence $\U = \Q_\infty$, that is $U = Q_\infty$ thus contradicting our choice of $\U$.    Finally, $S\ne Q_\infty$, otherwise
$\U=-\Q = \P$. Thus $Q+U-S-\Q_\infty$ is a minimal vector. 

Next we show that $P + S-U-Q_\infty$ is a minimal vector.  From the argument above we know that
$S,U,Q_\infty$ are distinct.  Since $d(L_\calP) = 2$, we also  know that $P\ne U$ and $P\ne Q_\infty$. If $P=S$ then 
$\U = \P + \S = 2\P$, which is not possible by the choice of $U$. Thus $P + S-U-Q_\infty$
is a minimal vector. 

We have shown that $v$ is the sum of  two minimal vectors, which completes the proof of the theorem.
\endproof
\smallskip

Finally, we provide an estimate on the covering radius of~$L_{\calP}$. Recall that the covering radius (also called the inhomogeneous minimum) $\mu(L)$ of a lattice~$L$ is defined as
$$\mu(L) = \inf \left\{ r \in \real_{>0} : B_V(r) + L = V \right\},$$
where $V = \spn_{\real} L$ and $B_V(r)$ is the closed ball of radius $r$ centered at the origin in~$V$. In addition to an estimate on~$\mu(L_{\calP})$, our next theorem can also be interpreted as a result about the closest vector problem on such lattices.

\begin{thm} \label{cover} The covering radius of~$L_{\calP}$ satisfies the inequality
\begin{equation}
\label{cvr_bound}
\mu(L_{\calP}) \leq \frac{1}{2} \left( \sqrt{    n^2 +4n + 8 }    +  \sqrt{n}  \right).
\end{equation} 
In other words, if $V = \spn_{\real} A_{n-1} = \spn_{\real} L_{\calP} \subset \real^n$ and $v \in V$, then there exists a lattice point in $L_\calP$ within distance $\frac{1}{2} \left ( \sqrt{    n^2 +4n + 8 }    +  \sqrt{n}  \right )$ from $v$. Furthermore, if $v \in A_{n-1}$ then there is a lattice point in $L_\calP$ within distance $\sqrt{2}$ from $v$.
\end{thm}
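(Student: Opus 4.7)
The plan is to combine a coordinatewise rounding of $v$ with a single divisor-theoretic correction supplied by the bijection $\Phi:\calP \to \mathrm{Cl}^0(E)$. Given $v = (v_0, \ldots, v_{n-1}) \in V$, I first approximate $v$ by a nearby integer vector $w$, then adjust $w$ so that its associated divisor becomes principal, and bound the total displacement by the triangle inequality.

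For the rounding step, let $w_i \in \ZZ$ be a nearest integer to $v_i$, so that $|v_i - w_i| \leq 1/2$ and hence $\|v - w\| \leq \sqrt{n}/2$. Since $\sum_i v_i = 0$, the integer
$$\sigma := \sum_{i=0}^{n-1} w_i = -\sum_{i=0}^{n-1}(v_i - w_i)$$
satisfies $|\sigma| \leq n/2$. Identify $w$ with the divisor $D := \sum_i w_i P_i$ of degree $\sigma$ supported on $\calP$.

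For the correction step, the degree-zero divisor $D - \sigma Q_\infty$ has a unique class $[P_j - Q_\infty]$ in $\mathrm{Cl}^0(E)$ via $\Phi$, so $u := D - (\sigma - 1) Q_\infty - P_j$ is principal and represents a vector of $L_\calP$. When $j \neq 0$, the vector $u - w \in \ZZ^n$ has coordinate $-(\sigma - 1)$ at $Q_\infty$ and $-1$ at $P_j$ (and zeros elsewhere), giving
$$\|u - w\|^2 = (\sigma - 1)^2 + 1 \leq (n/2 + 1)^2 + 1 = (n^2 + 4n + 8)/4;$$
when $j = 0$ we get instead $\|u - w\|^2 = \sigma^2 \leq n^2/4$, which satisfies the same bound. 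The triangle inequality then yields
$$\|v - u\| \leq \|v - w\| + \|w - u\| \leq \tfrac{1}{2}\sqrt{n} + \tfrac{1}{2}\sqrt{n^2 + 4n + 8},$$
which is exactly~\eqref{cvr_bound}.

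For the second assertion, if $v \in A_{n-1}$ then take $w = v$ and $\sigma = 0$; the same construction produces $u := D + Q_\infty - P_j \in L_\calP$ with $\|v - u\|^2 \leq 1 + 1 = 2$ (or $u = w$ when $j = 0$), so the distance is at most $\sqrt{2}$. The whole argument is essentially bookkeeping and has no substantial obstacle; the only points requiring a little care are the sharp bound $|\sigma| \leq n/2$, which uses that $\sigma$ is an integer, and the uniform treatment of the edge case $j = 0$ in the correction step.
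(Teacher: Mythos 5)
Your proposal is correct and follows essentially the same route as the paper: round $v$ coordinatewise to an integer vector, use the group structure on $\calP$ (equivalently, Theorem~\ref{gen}) to correct the resulting divisor to a principal one by adjusting only the $Q_\infty$-coordinate and one further coordinate $P_j$, and bound the total displacement via the triangle inequality together with $|\sigma|\leq n/2$. The edge case $j=0$ and the $\sqrt{2}$ refinement for $v\in A_{n-1}$ are handled exactly as in the paper's argument.
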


\begin{proof}
Suppose that $v := (r_0,r_1,\ldots, r_{n-1})$ is a point in $V$, so $r_0 + ... + r_{n-1} = 0$.
Let $w_1 := (a_0,a_1,\ldots, a_{n-1}) \in \ZZ^n$ where $a_i$ is the nearest integer
to $r_i$ (note that if $r_i$ is a half integer, then $a_i$ is just the floor of $r_i$). 
Now
$a_0\P_0 + a_1\P_1 + \ldots + a_{n-1} \P_{n-1}$ equals a point $\P_j$ for some $j$, $0\le j \le n-1$. 

First suppose that $j\ne 0$.
Put $A_0 :=-a_1-a_2-\ldots -a_{n-1} +1$. Then by Theorem \ref{gen},  the vector  $w_2 := (A_0,a_1, \dots,     a_{j-1}, a_j-1, a_{j+1}, \ldots, a_{n-1})$
is a lattice point and the distance between $w_2$ and $v$ is 
\begin{eqnarray}
&     & ||v-w_2||   \nonumber                                                    \\                   
&\le & ||v-w_1|| + ||w_1-w_2||                   \nonumber          \\
&\le & \sqrt{n/4} + \sqrt{ (A_0-a_0)^2  + 1}      \nonumber     \\
&=   & \sqrt{n/4} + \sqrt{ (a_0+a_1 + \ldots + a_{n-1} -1)^2  + 1}        \nonumber  \\
&=   & \sqrt{n}/2 + \sqrt{   S^2 -2S + 2 },   \label{eqn4}
\end{eqnarray}
where $S = a_0 + a_1 + \ldots + a_{n-1}$. Now
\begin{eqnarray*}
|S| & = & |a_0 + a_1 + \dots + a_{n-1} |  =  |(a_0-r_0) + .... + (a_{n-1}-r_{n-1})| \\
& \le & |a_0-r_0| + ... |a_{n-1}-r_{n-1}| \le n/2.
\end{eqnarray*}
Thus
$$||v-w_2|| \le \sqrt{n}/2 + \sqrt{    n^2/4 +2(n/2) + 2 }
= \frac{1}{2} \left (  \sqrt{n} + \sqrt{    n^2 +4n + 8 }  \right),$$
as required. 

If $j=0$, put 
$A_0 = -a_1 - a_2 -\ldots -a_{n-1}$ and $w_2 = (A_0,a_1,\ldots, a_{n-1})$. Then
$$||v-w_2|| \le  ||v-w_1|| + ||w_1-w_2||  \le 
\sqrt{n/4} + \sqrt{ (A_0-a_0)^2} = 
\sqrt{n}/2 + |S|
\le \sqrt{n}/2  + n/2,$$
by the argument above. This is still less than the claimed bound (\ref{cvr_bound}).

The remaining assertion of the theorem easily follows
from the above argument: if $v\in A_{n-1}$ then $S=0$ and $v=w_1$, so from (\ref{eqn4}) we obtain that 
$||v-w_2|| \le \sqrt{2}$, as claimed.
\end{proof}

\begin{remark} Suppose that $n \geq 5$, then~$L_{\calP}$ is well-rounded by Theorem~\ref{main-1}, and $d(L_{\calP})=2$ by Lemma~\ref{form}. In this case, the standard bounds on covering radius of a lattice (see~\cite{gruber}) guarantee that
$$\mu(L_{\calP}) \leq n-1,$$
which is weaker than our bound~\eqref{cvr_bound} when $n$ is sufficiently large (this would imply that $q$ is also large, since $n \leq q+1+2\sqrt{q}$ by Hasse's theorem).
\end{remark}

\section{Acknowledgement}

The authors would like to thank  Dr Min Sha for his indepth reading, corrections and comments on the previous version of the manuscript. He also pointed out that Theorem 3.3 is still true if the elliptic curve has fewer than 5 points.

\bibliographystyle{plain}  
\bibliography{elliptic_wr}    
\end{document}